\UseAllTwocells \xyoption{frame} \CompileMatrices
\newtheorem{theorem}{Theorem}[section]
\newtheorem{lemma}[theorem]{Lemma}
\newtheorem{cor}[theorem]{Corollary}
\theoremstyle{definition}
\newtheorem{example}[theorem]{Example}
\theoremstyle{remark}
\newtheorem{remark}[theorem]{Remark}
\numberwithin{equation}{section}
\begin{document}

\title[Finite abelian gerbes over toric Deligne-Mumford stacks]{A Note on Finite Abelian Gerbes \\ over Toric Deligne-Mumford Stacks}

\author{Yunfeng Jiang}
\address{Department of Mathematics,  University of Utah, 155S 1400E JWB233, UT 84112}
\email{jiangyf@math.utah.edu}
\thanks{Received by the editors September 11, 2006, and, in revised form, May 8, 2007, June 10, 2007, 
October 11, 2007, and November 6, 2007.}

\subjclass[2000]{Primary  14E20; Secondary 46E25}

\keywords{Gerbes, Toric Deligne-Mumford stacks}

\begin{abstract}
Any toric Deligne-Mumford stack is a $\mu$-gerbe over the
underlying toric orbifold for a finite abelian group $\mu$. In
this paper we give a sufficient condition so that certain kinds of
gerbes over a toric Deligne-Mumford stack are again toric
Deligne-Mumford stacks.
\end{abstract}

\maketitle

\section{Introduction}
Let $\mathbf{\Sigma}:=(N,\Sigma,\beta)$ be a stacky fan of
$rank(N)=d$ as defined in \cite{BCS}. If there are $n$
one-dimensional cones in the fan $\Sigma$, then modelling the
construction of toric varieties \cite{Cox},\cite{F},  the toric
Deligne-Mumford stack $\mathcal{X}(\mathbf{\Sigma})=[Z/G]$ is a
quotient stack, where $Z=\mathbb{C}^{n}-V$, the close subvariety
$V\subset \mathbb{C}^{n}$ is  determined by the ideal $J_{\Sigma}$
generated by $\{\prod_{\rho_{i}\nsubseteq\sigma}z_{i}:
\sigma\in\Sigma\}$ and $G$ acts on $Z$ through the map $\alpha: G
\longrightarrow (\mathbb{C}^{\times})^{n}$ in the following exact
sequence determined by the stacky fan (see \cite{BCS}):
\begin{equation}\label{stack}
1\longrightarrow \mu\longrightarrow
G\stackrel{\alpha}{\longrightarrow}
(\mathbb{C}^{\times})^{n}\longrightarrow T\longrightarrow 1. \
\end{equation}
Let $\overline{G}=Im(\alpha)$, then $[Z/\overline{G}]$ is the
underlying toric orbifold $\mathcal{X}(\mathbf{\Sigma_{red}})$.
The toric Deligne-Mumford stack  $\mathcal{X}(\mathbf{\Sigma})$ is
a $\mu$-gerbe over $\mathcal{X}(\mathbf{\Sigma_{red}})$.

Let $\mathcal{X}(\mathbf{\Sigma})$ be a toric Deligne-Mumford
stack associated to the stacky fan $\mathbf{\Sigma}$. Let $\nu$ be
a finite abelian group, and let $\mathcal{G}$ be a $\nu$-gerbe
over $\mathcal{X}(\mathbf{\Sigma})$. We give a sufficient
condition so that  $\mathcal{G}$ is also a toric Deligne-Mumford
stack.  We have the following theorem:

\begin{theorem}
Let $\mathcal{X}(\mathbf{\Sigma})$ be a toric Deligne-Mumford
stack with stacky fan $\mathbf{\Sigma}$. Then every $\nu$-gerbe
$\mathcal{G}$ over $\mathcal{X}(\mathbf{\Sigma})$ is induced by a
central extension
$$1\longrightarrow\nu\longrightarrow \widetilde{G}\longrightarrow
G\longrightarrow 1,$$ i.e., we have a Cartesian diagram:
$$\xymatrix{
~\mathcal{G}\dto^{}\rto^{} &~\mathcal{B}\widetilde{G}\dto^{}\\
~\mathcal{X}(\mathbf{\Sigma})\rto^{} &~\mathcal{B}G .}$$
\end{theorem}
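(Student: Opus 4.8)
The plan is to identify the gerbe $\mathcal{G}$ with a fibre product over the classifying stack $\mathcal{B}G$. There is a canonical morphism $f\colon\mathcal{X}(\mathbf{\Sigma})=[Z/G]\longrightarrow\mathcal{B}G$ classifying the $G$-torsor $Z\to[Z/G]$, and I want to show that the class of $\mathcal{G}$ lies in the image of $f^{*}$; since a class on $\mathcal{B}G$ is precisely the datum of a central extension of $G$ by $\nu$, pulling back the tautological gerbe $\mathcal{B}\widetilde{G}\to\mathcal{B}G$ will recover $\mathcal{G}$. Concretely: as $\nu$ is finite abelian, equivalence classes of $\nu$-gerbes over a Deligne-Mumford stack $\mathcal{Y}$ are classified, functorially in $\mathcal{Y}$, by $H^{2}(\mathcal{Y},\nu)$ (in the etale, equivalently analytic, topology of the stack), so $[\mathcal{G}]\in H^{2}([Z/G],\nu)$. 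On the other hand $H^{2}(\mathcal{B}G,\nu)$ is canonically the group of central extensions $1\to\nu\to\widetilde{G}\to G\to1$ (recall that $G$ is a diagonalizable group, a product of a torus and a finite abelian group, so this holds algebraically), and such an extension gives the $\nu$-gerbe $\mathcal{B}\widetilde{G}\to\mathcal{B}G$ whose class is the corresponding element of $H^{2}(\mathcal{B}G,\nu)$.

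Next I would compute $H^{2}([Z/G],\nu)$ through $f$. Writing $[Z/G]$ as the Borel construction $EG\times_{G}Z$, the morphism $f$ is the projection in the fibration $Z\hookrightarrow[Z/G]\to\mathcal{B}G$, whose Leray-Serre spectral sequence reads $E_{2}^{p,q}=H^{p}\bigl(\mathcal{B}G,H^{q}(Z,\nu)\bigr)\Rightarrow H^{p+q}([Z/G],\nu)$. The essential geometric input is that $Z$ is $2$-connected: since $J_{\Sigma}$ is a squarefree monomial ideal, $V=V(J_{\Sigma})$ is the Stanley-Reisner subscheme, i.e.\ a union of coordinate subspaces of $\mathbb{C}^{n}$ indexed by the minimal non-faces of $\Sigma$, so $Z=\mathbb{C}^{n}-V$ is the complement of a coordinate-subspace arrangement; such a complement is homotopy equivalent to the moment-angle complex of the simplicial complex underlying $\Sigma$, which is $2$-connected. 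Hence $H^{1}(Z,\nu)=H^{2}(Z,\nu)=0$, so $E_{2}^{p,q}=0$ for $q\in\{1,2\}$, all relevant low-degree differentials vanish, and the edge homomorphism $f^{*}\colon H^{2}(\mathcal{B}G,\nu)\longrightarrow H^{2}([Z/G],\nu)$ is an isomorphism (surjectivity is all that is needed below).

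To conclude, choose $\eta\in H^{2}(\mathcal{B}G,\nu)$ with $f^{*}\eta=[\mathcal{G}]$, let $1\to\nu\to\widetilde{G}\to G\to1$ be the central extension with class $\eta$, and set $\mathcal{G}':=[Z/G]\times_{\mathcal{B}G}\mathcal{B}\widetilde{G}$. Then $\mathcal{G}'\to[Z/G]$ is a $\nu$-gerbe whose class is $f^{*}\eta=[\mathcal{G}]$, so by the classification of $\nu$-gerbes there is an equivalence $\mathcal{G}'\simeq\mathcal{G}$ over $\mathcal{X}(\mathbf{\Sigma})$; composing this equivalence with the projection $\mathcal{G}'\to\mathcal{B}\widetilde{G}$ produces exactly the asserted Cartesian square.

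I expect the main obstacle to be the second step: proving the vanishing $H^{1}(Z,\nu)=H^{2}(Z,\nu)=0$ (equivalently, the $2$-connectivity of the arrangement complement $Z$), and keeping the two cohomological classifications — of $\nu$-gerbes over $\mathcal{X}(\mathbf{\Sigma})$ and of central extensions of the algebraic group $G$ — mutually compatible as regards coefficients, topology, and the algebraic-group meaning of $H^{2}(\mathcal{B}G,\nu)$. A more hands-on variant avoids the spectral sequence altogether: pull $\mathcal{G}$ back along the atlas $Z\to[Z/G]$, trivialise it there using $H^{2}(Z,\nu)=0$ (the trivialisation being essentially unique because $H^{1}(Z,\nu)=0$), and then reconstruct $\widetilde{G}$ from the resulting descent datum on $Z\times_{[Z/G]}Z\cong G\times Z$; this yields the same conclusion but requires explicitly manipulating the automorphism $2$-group of the trivial $\nu$-gerbe.
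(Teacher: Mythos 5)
Your proposal is correct and follows essentially the same route as the paper: pull back along $f\colon[Z/G]\to\mathcal{B}G$, use the Leray spectral sequence with fibre $Z$ together with $H^{1}(Z,\nu)=H^{2}(Z,\nu)=0$ to identify $H^{2}([Z/G],\nu)$ with $H^{2}(\mathcal{B}G,\nu)$, and invoke the classification of $\nu$-gerbes by $H^{2}(-,\nu)$. The only divergence is in the vanishing lemma, which the paper obtains from the local-cohomology long exact sequence using $\mathrm{Codim}_{\mathbb{C}}(V,\mathbb{C}^{n})\geq 2$ rather than from the $2$-connectivity of the moment-angle complex; both arguments are valid and rest on the same geometric fact about $V$.
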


In general, the $\nu$-gerbe $\mathcal{G}$ is not a toric
Deligne-Mumford stack. But if the central extension is abelian,
then we have:

\begin{cor}
If the $\nu$-gerbe $\mathcal{G}$ is induced from an abelian
central extension, it is a toric Deligne-Mumford stack.
\end{cor}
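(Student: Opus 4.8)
The plan is to promote the abelian central extension $1\to\nu\to\widetilde{G}\to G\to 1$ furnished by the Theorem to a morphism of stacky fans supported on the \emph{same} fan $\Sigma$, and then to identify the associated toric Deligne--Mumford stack with $\mathcal{G}$. Write $\mathcal{X}(\mathbf{\Sigma})=[Z/G]$ as in the Introduction, and recall from \cite{BCS} that $G=\mathrm{Hom}_{\mathbb{Z}}(DG(\beta),\mathbb{C}^{\times})$ and that $\alpha\colon G\to(\mathbb{C}^{\times})^{n}$ is obtained by applying the exact contravariant functor $\mathrm{Hom}(-,\mathbb{C}^{\times})$ to the Gale dual $\beta^{\vee}\colon\mathbb{Z}^{n}\to DG(\beta)$. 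The crucial observation is that $Z=\mathbb{C}^{n}-V$ and the fan $\Sigma$ depend only on the combinatorial data $J_{\Sigma}$ and will not change.

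First I would note that, since $\widetilde{G}$ is abelian, $\nu$ is finite, and $G$ is diagonalizable, $\widetilde{G}$ is again diagonalizable: its unipotent part injects into that of $G$, which is trivial (equivalently, the category of diagonalizable group schemes is closed under extensions). Hence $\widetilde{G}=\mathrm{Hom}_{\mathbb{Z}}(\widetilde{D},\mathbb{C}^{\times})$ for a finitely generated abelian group $\widetilde{D}$, and applying $\mathrm{Hom}(-,\mathbb{C}^{\times})$ to the extension yields a short exact sequence $0\to DG(\beta)\xrightarrow{\iota}\widetilde{D}\to\hat\nu\to 0$, where $\hat\nu=\mathrm{Hom}(\nu,\mathbb{C}^{\times})\cong\nu$ is finite; in particular $\iota$ is injective with finite cokernel, so $\mathrm{rank}\,\widetilde{D}=\mathrm{rank}\,DG(\beta)=n-d$.

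Next I would construct the new stacky fan. Put $\widetilde{\beta}^{\vee}:=\iota\circ\beta^{\vee}\colon\mathbb{Z}^{n}\to\widetilde{D}$, let $\widetilde{N}$ be its Gale dual $DG(\widetilde{\beta}^{\vee})$ (of rank $n-(n-d)=d$), and set $\widetilde{\beta}:=(\widetilde{\beta}^{\vee})^{\vee}\colon\mathbb{Z}^{n}\to\widetilde{N}$. Since $\iota$ becomes an isomorphism after $\otimes\,\mathbb{Q}$, the map $\widetilde{\beta}^{\vee}_{\mathbb{Q}}$ is identified with $\beta^{\vee}_{\mathbb{Q}}$, and because Gale duality is rationally an involution depending only on the underlying rational map, this gives $\widetilde{N}_{\mathbb{Q}}\cong N_{\mathbb{Q}}$ compatibly with $\widetilde{\beta}_{\mathbb{Q}}=\beta_{\mathbb{Q}}$. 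Consequently each $\widetilde{\beta}(e_{i})$ still generates the ray $\rho_{i}$ of the (unchanged, simplicial) fan $\Sigma$, so $\widetilde{\mathbf{\Sigma}}:=(\widetilde{N},\Sigma,\widetilde{\beta})$ is a bona fide stacky fan.

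Finally I would identify $\mathcal{X}(\widetilde{\mathbf{\Sigma}})$ with $\mathcal{G}$. As $\Sigma$ is unchanged the variety attached to $\widetilde{\mathbf{\Sigma}}$ is again $Z$; as Gale duality is an involution $DG(\widetilde{\beta})=\widetilde{D}$, so the group is $\widetilde{G}=\mathrm{Hom}(\widetilde{D},\mathbb{C}^{\times})$; and applying $\mathrm{Hom}(-,\mathbb{C}^{\times})$ to $\widetilde{\beta}^{\vee}=\iota\circ\beta^{\vee}$ shows $\widetilde{G}$ acts on $Z$ precisely through $\widetilde{G}\to G\xrightarrow{\alpha}(\mathbb{C}^{\times})^{n}$. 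Hence $\mathcal{X}(\widetilde{\mathbf{\Sigma}})=[Z/\widetilde{G}]$ for this action. On the other hand, by the Theorem $\mathcal{G}=\mathcal{X}(\mathbf{\Sigma})\times_{\mathcal{B}G}\mathcal{B}\widetilde{G}=[Z/G]\times_{\mathcal{B}G}\mathcal{B}\widetilde{G}$, and unwinding the fibre product (a $G$-torsor with an equivariant map to $Z$, together with a lift of its structure group to $\widetilde{G}$) produces exactly $[Z/\widetilde{G}]$ with the same action; thus $\mathcal{G}\cong\mathcal{X}(\widetilde{\mathbf{\Sigma}})$ is a toric Deligne--Mumford stack. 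I expect the main obstacle to be the Gale-duality bookkeeping of the third and fourth steps --- verifying that $(\widetilde{N},\Sigma,\widetilde{\beta})$ satisfies the stacky-fan axioms and that the toric Deligne--Mumford construction applied to it returns $[Z/\widetilde{G}]$ with the correct action; the remainder is the anti-equivalence between finitely generated abelian groups and diagonalizable groups, and this is exactly where the abelian hypothesis is indispensable, since a non-abelian central extension of $G$ by $\nu$ need not be diagonalizable.
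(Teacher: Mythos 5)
Your proposal is correct and follows essentially the same route as the paper: identify $\mathcal{G}$ with $[Z/\widetilde{G}]$ via the Cartesian square from Theorem 1.1, then apply $\mathrm{Hom}_{\mathbb{Z}}(-,\mathbb{C}^{\times})$ (Pontryagin duality) and Gale duality to the abelian extension to produce a new stacky fan $(\widetilde{N},\Sigma,\widetilde{\beta})$ with $\mathcal{X}(\widetilde{\mathbf{\Sigma}})=[Z/\widetilde{G}]$. Your explicit checks that $\widetilde{G}$ is diagonalizable and that $\widetilde{\beta}$ still spans the rays of $\Sigma$ rationally merely fill in details the paper leaves implicit.
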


This small note is organized  as follows. In Section 2 we
construct the new toric Deligne-Mumford stack from an abelian
central extension and prove the main results . In Section 3 we
give an example of  $\nu$-gerbe over a toric Deligne-Mumford
stack.

In this paper, by an $orbifold$ we mean a smooth Deligne-Mumford
stack with trivial stabilizers at the  generic points.

\section{The Proof of Main Results}

We refer the reader to \cite{BCS} for the construction and
notation of toric Deligne-Mumford stacks. For the general theory
of stacks, see \cite{BEFFGK}.

Let $\mathbf{\Sigma}:=(N,\Sigma,\beta)$ be a stacky fan. From
Proposition 2.2 in \cite{BCS}, we have the following exact
sequences:
$$0\longrightarrow DG(\beta)^{*}\longrightarrow \mathbb{Z}^{n}\stackrel{\beta}{\longrightarrow}
N\longrightarrow Coker(\beta)\longrightarrow
0,$$
$$0\longrightarrow N^{*}\longrightarrow \mathbb{Z}^{n}\stackrel{\beta^{\vee}}{\longrightarrow}
DG(\beta)\longrightarrow Coker(\beta^{\vee})\longrightarrow 0,$$
where $\beta^{\vee}$ is the Gale dual of $\beta$. As a
$\mathbb{Z}$-module, $\mathbb{C}^{\times}$ is divisible, so it is
an injective $\mathbb{Z}$-module and hence the functor
$Hom_{\mathbb{Z}}(-,\mathbb{C}^{\times})$ is exact.  We get the
exact sequence:
$$1\longrightarrow Hom_{\mathbb{Z}}(Coker(\beta^{\vee}),\mathbb{C}^{\times})\longrightarrow
Hom_{\mathbb{Z}}(DG(\beta),\mathbb{C}^{\times})\longrightarrow
Hom_{\mathbb{Z}}(\mathbb{Z}^{n},\mathbb{C}^{\times})$$
$$\longrightarrow
Hom_{\mathbb{Z}}(N^{*},\mathbb{C}^{\times})\longrightarrow 1.$$
Let
$\mu:=Hom_{\mathbb{Z}}(Coker(\beta^{\vee}),\mathbb{C}^{\times})$,
we have the exact sequence (\ref{stack}). Let $\Sigma(1)=n$ be the
set of one dimensional cones in $\Sigma$ and $V\subset
\mathbb{C}^{n}$ the closed subvariety defined by the ideal
generated by
$$J_{\Sigma}=\left<\prod_{\rho_{i}\nsubseteq \sigma} z_{i}:
\sigma \in \Sigma \right>.$$ Let $Z:=\mathbb{C}^{n}\setminus
V$. From \cite{Cox}, the complex codimension of $V$ in
$\mathbb{C}^{n}$ is at least $2$. The toric Deligne-Mumford stack
$\mathcal{X}(\mathbf{\Sigma})=[Z/G]$ is the quotient stack where
the action of $G$ is through the map $\alpha$ in (\ref{stack}).

\begin{lemma}\label{keylemma}
If $Codim_{\mathbb{C}}(V,\mathbb{C}^{n})\geq 2$, then
$H^{1}(Z,\nu)=H^{2}(Z,\nu)=0$, where $\nu$ is a finite abelian
group.
\end{lemma}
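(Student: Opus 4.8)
The plan is to compare the cohomology of $Z=\mathbb{C}^{n}\setminus V$ with that of the contractible space $\mathbb{C}^{n}$, the discrepancy being measured by the cohomology of $\mathbb{C}^{n}$ with supports along $V$, which the codimension hypothesis should force to vanish in low degrees. Concretely, I would first write down, for the constant sheaf with stalk $\nu$ and the closed pair $V\subset\mathbb{C}^{n}$ with open complement $Z$, the long exact sequence of local cohomology
$$\cdots\longrightarrow H^{i}_{V}(\mathbb{C}^{n},\nu)\longrightarrow H^{i}(\mathbb{C}^{n},\nu)\longrightarrow H^{i}(Z,\nu)\longrightarrow H^{i+1}_{V}(\mathbb{C}^{n},\nu)\longrightarrow\cdots .$$
Since $\mathbb{C}^{n}$ is contractible, $H^{i}(\mathbb{C}^{n},\nu)=0$ for all $i\geq 1$, so the sequence collapses to isomorphisms $H^{i}(Z,\nu)\cong H^{i+1}_{V}(\mathbb{C}^{n},\nu)$ for $i\geq 1$. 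It therefore suffices to prove $H^{2}_{V}(\mathbb{C}^{n},\nu)=H^{3}_{V}(\mathbb{C}^{n},\nu)=0$, and in fact I would aim for the slightly stronger statement $H^{j}_{V}(\mathbb{C}^{n},\nu)=0$ for every $j\leq 3$.

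The key step is to bring in the codimension hypothesis through Poincar\'e--Lefschetz (Alexander) duality: since $\mathbb{C}^{n}$ is an oriented topological manifold of real dimension $2n$, there is a natural isomorphism $H^{j}_{V}(\mathbb{C}^{n},\nu)\cong H^{BM}_{2n-j}(V,\nu)$ with Borel--Moore homology. The hypothesis $Codim_{\mathbb{C}}(V,\mathbb{C}^{n})\geq 2$ says exactly that $V$ is a complex algebraic set of real dimension at most $2n-4$; as the Borel--Moore homology of a finite-dimensional locally compact space vanishes above its dimension, $H^{BM}_{k}(V,\nu)=0$ whenever $k\geq 2n-3$. Taking $k=2n-j$ this gives $H^{j}_{V}(\mathbb{C}^{n},\nu)=0$ for all $j\leq 3$, and feeding $j=2,3$ back into the isomorphisms above yields $H^{1}(Z,\nu)=H^{2}(Z,\nu)=0$.

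The point needing care is that the subvariety $V$ cut out by $J_{\Sigma}$ is a union of several coordinate linear subspaces and is in particular singular, so one cannot simply compute $H^{*}_{V}(\mathbb{C}^{n},\nu)$ from a tubular neighborhood and a Gysin/Thom isomorphism, as one would for a smooth submanifold; the duality with Borel--Moore homology is precisely what accommodates the reducible, singular $V$. If one prefers to avoid Borel--Moore homology, the same vanishing can be extracted by induction on the number of components of $V$ via the Mayer--Vietoris sequence for cohomology with supports, using that all pairwise intersections of distinct coordinate subspaces of codimension $\geq 2$ are again coordinate subspaces of codimension $\geq 2$ (indeed $\geq 3$) and that the base case is the Thom isomorphism for a single linear subspace of real codimension $\geq 4$. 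In every formulation the content is the classical principle that deleting a closed set of real codimension $\geq 4$ from a manifold leaves $H^{1}$ and $H^{2}$ unchanged, here applied to the contractible manifold $\mathbb{C}^{n}$.
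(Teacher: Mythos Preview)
Your proof is correct and follows the same skeleton as the paper: the long exact sequence for cohomology with supports along $V\subset\mathbb{C}^{n}$, the vanishing of $H^{i}(\mathbb{C}^{n},\nu)$ for $i>0$, and the vanishing of $H^{j}_{V}(\mathbb{C}^{n},\nu)$ for $j\leq 3$ forced by the real codimension being at least $4$. The paper simply asserts this last vanishing as a consequence of the codimension bound, whereas you supply the justification via Alexander/Poincar\'e--Lefschetz duality $H^{j}_{V}(\mathbb{C}^{n},\nu)\cong H^{BM}_{2n-j}(V,\nu)$ and the dimension bound on Borel--Moore homology; your Mayer--Vietoris alternative for the coordinate-subspace arrangement is also a valid way to fill in that step.
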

\begin{proof}
Consider the following exact sequence:
\begin{eqnarray}
0& \longrightarrow & H^{0}_{V}(\mathbb{C}^{n},\nu)\longrightarrow
H^{0}(\mathbb{C}^{n},\nu)\longrightarrow
H^{0}(Z,\nu)\longrightarrow  \nonumber \\
&\longrightarrow & H^{1}_{V}(\mathbb{C}^{n},\nu)\longrightarrow
H^{1}(\mathbb{C}^{n},\nu)\longrightarrow
H^{1}(Z,\nu)\longrightarrow \nonumber \\
&\longrightarrow & H^{2}_{V}(\mathbb{C}^{n},\nu)\longrightarrow
\cdots \nonumber
\end{eqnarray}
Since $Codim_{\mathbb{C}}(V,\mathbb{C}^{n})\geq 2$, so the real
codimension is at least $4$ and $H^{i}_{V}(\mathbb{C}^{n},\nu)=0$
for $i=1,2,3$, so from the exact sequence and
$H^{i}(\mathbb{C}^{n},\nu)=0$ for all $i>0$ we prove the lemma.
\end{proof}
\subsection{The Proof of Theorem 1.1.}
Consider the following diagram
$$\xymatrix{
Z\rto\dto & pt\dto\\
[Z/G]\rto^\pi & \ \mathcal{B}G}$$ which is Cartesian.  Consider
the Leray spectral sequence for the fibration $\pi$:
$$H^{p}(\mathcal{B}G,R^{q}\pi_{*}\nu)\Longrightarrow H^{p+q}([Z/G],\nu).$$
We compute
$$H^{2}([Z/G],\nu)=\bigoplus_{p+q=2}H^{p}(\mathcal{B}G,R^{q}\pi_{*}\nu).$$
First we have that $R^{q}\pi_{*}\nu=[H^{q}(Z,\nu)/G]$. There are
three cases.
\begin{enumerate}
\item When $p=2, q=0$, $R^{0}\pi_{*}\nu=\nu$ because $Z$ is
connected, so
$$H^{p}(\mathcal{B}G,R^{q}\pi_{*}\nu)=H^{2}(\mathcal{B}G,\nu);$$
\item When $p=1, q=1$, $R^{1}\pi_{*}\nu=[H^{1}(Z,\nu)/G]$, so
$$H^{p}(\mathcal{B}G,R^{q}\pi_{*}\nu)=H^{1}(\mathcal{B}G,H^{1}(Z,\nu)),$$
and by Lemma
\ref{keylemma}, $H^{1}(Z,\nu)=0$, so we have
$H^{p}(\mathcal{B}G,R^{q}\pi_{*}\nu)=0$; \item When $p=0, q=2$,
$R^{2}\pi_{*}\nu=[H^{2}(Z,\nu)/G]$, so
$$H^{p}(\mathcal{B}G,R^{q}\pi_{*}\nu)=H^{0}(\mathcal{B}G,H^{2}(Z,\nu)),$$
also from Lemma \ref{keylemma}, $H^{2}(Z,\nu)=0$, we have
$H^{p}(\mathcal{B}G,R^{q}\pi_{*}\nu)=0$.
\end{enumerate}
So we get that
$$H^{2}([Z/G],\nu)\cong H^{2}(\mathcal{B}G,\nu).$$
Since for the finite abelian group $\nu$, the $\nu$-gerbes  are
classified by the second cohomology group with coefficient in the
group $\nu$. Theorem 1.1 is proved. $\square$

\subsection{The Proof of Corollary 1.2.}
Let $\mathcal{X}(\mathbf{\Sigma})=[Z/G]$. The $\nu$-gerbe
$\mathcal{G}$ over $[Z/G]$ is induced from  a $\nu$-gerbe
$\mathcal{B}\widetilde{G}$ over $\mathcal{B}G$ in the following
central extension
$$1\longrightarrow\nu\longrightarrow
\widetilde{G}\stackrel{\varphi}{\longrightarrow} G\longrightarrow
1,$$ where $\widetilde{G}$ is an abelian group. So the pullback
gerbe over $Z$ under the map $Z\longrightarrow [Z/G]$ is trivial.
So we have
$$\mathcal{G}=\mathcal{B}\widetilde{G}\times_{\mathcal{B}G}[Z/G]=[Z/\widetilde{G}].$$
The stack $[Z/\widetilde{G}]$
is this $\nu$-gerbe $\mathcal{G}$ over $[Z/G]$. Consider the
commutative diagram:
\begin{equation}\label{graph}
\vcenter{\xymatrix{~
\widetilde{G}\dto_{\widetilde{\alpha}}\rto^{\varphi} & G \dto^{\alpha} \\
(\mathbb{C}^{\times})^{n}\rto^{\cong} &
(\mathbb{C}^{\times})^{n}}}
\end{equation}
where $\alpha$ is the map in (\ref{stack}). So we have the
following exact sequences:
$$1\longrightarrow
\nu\longrightarrow
ker(\widetilde{\alpha})\stackrel{}{\longrightarrow}
\mu\longrightarrow 1$$ and
$$1\longrightarrow
ker(\widetilde{\alpha})\longrightarrow
\widetilde{G}\stackrel{\widetilde{\alpha}}{\longrightarrow}
(\mathbb{C}^{\times})^{n}\longrightarrow T\longrightarrow 1,$$
where $T$ is the torus of the simplicial toric variety
$X(\Sigma)$. Since the abelian groups $\widetilde{G}$, $G$ and
$(\mathbb{C}^{\times})^{n}$ are all locally compact topological
groups, taking  Pontryagin duality and Gale dual, we have the
following diagrams:
\[
\begin{CD}
0 @ >>>N^{*}@ >>> \mathbb{Z}^{n}@ >{\beta^{\vee}}>> DG(\beta) @
>>> Coker(\beta^{\vee})@>>> 0\\
&& @VV{}V@VV{id}V@VV{p_{\varphi}}V@VVV \\
0@ >>>\widetilde{N}^{*}@
>{}>>\mathbb{Z}^{n}@
>{(\widetilde{\beta})^{\vee}}>> DG(\widetilde{\beta}) @>>>Coker((\widetilde{\beta})^{\vee})
@>>>0,
\end{CD}
\]

\[
\begin{CD}
0 @ >>>DG(\widetilde{\beta})^{*}@ >>> \mathbb{Z}^{n}@
>{\widetilde{\beta}}>> \widetilde{N} @
>>> Coker(\widetilde{\beta})@>>> 0\\
&& @VV{}V@VV{id}V@VV{}V@VVV \\
0 @ >>>DG(\beta)^{*}@
>{}>>\mathbb{Z}^{n}@
>{\beta}>> N @>>>Coker(\beta) @>>>0,
\end{CD}
\]
where $p_{\varphi}$ is induced by $\varphi$ in (\ref{graph}) under
the  Pontryagin duality. Suppose $\widetilde{\beta}:
\mathbb{Z}^{n}\longrightarrow \widetilde{N}$ is given by
$\{\widetilde{b}_{1},\cdots,\widetilde{b}_{n}\}$, then
$\mathbf{\widetilde{\Sigma}}:=(\widetilde{N},\Sigma,\widetilde{\beta})$
is a new stacky fan. The toric Deligne-Mumford stack
$\mathcal{X}(\mathbf{\widetilde{\Sigma}})=[Z/\widetilde{G}]$ is
the $\nu$-gerbe $\mathcal{G}$ over $\mathcal{X}(\mathbf{\Sigma})$.
$\square$

\begin{remark}
From Proposition 4.6 in \cite{BN},  any Deligne-Mumford stack is a
$\nu$-gerbe over an orbifold for a finite group $\nu$. Our results
are the toric case of that general result.

In particular, from a stacky fan
$\mathbf{\Sigma}=(N,\Sigma,\beta)$. Let
$\mathbf{\Sigma_{red}}=(\overline{N},\Sigma,\overline{\beta})$ be
the $reduced$ stacky fan, where $\overline{N}$ is the abelian
group $N$ modulo torsion, and
$\overline{\beta}:\mathbb{Z}^{n}\longrightarrow \overline{N}$ is
given by $\{\overline{b}_{1},\cdots,\overline{b}_{n}\}$ which are
the images of $\{b_{1},\cdots,b_{n}\}$ under the natural
projection $N\longrightarrow \overline{N}$. Then the toric
orbifold $\mathcal{X}(\mathbf{\Sigma_{red}})=[Z/\overline{G}]$.
From (\ref{stack}), let $\overline{G}=Im(\alpha)$, then we have
the following exact sequences:
$$1\longrightarrow \overline{G}\stackrel{}{\longrightarrow}
(\mathbb{C}^{\times})^{n}\longrightarrow T\longrightarrow 1,$$
$$1\longrightarrow
\mu\longrightarrow
G\stackrel{}{\longrightarrow}\overline{G}\longrightarrow 1.$$ So
$G$ is an abelian central extension of $\overline{G}$ by $\mu$.
$\mathcal{X}(\mathbf{\Sigma})$ is a $\mu$-gerbe over the toric
orbifold $\mathcal{X}(\mathbf{\Sigma_{red}})$. Any  $\mu$-gerbe
over the toric orbifold coming from an abelian central extension
is a toric Deligne-Mumford stack. This is a special case of the
main results and is the toric case of rigidification construction
in \cite{ACV}.
\end{remark}

\begin{remark}
From the proof of Corollary 1.2 we see that if a $\nu$-gerbe over
$\mathcal{X}(\mathbf{\Sigma})$ comes from a gerbe over
$\mathcal{B}G$ and the central extension is abelian, then we can
construct a new toric Deligne-Mumford stack.
\end{remark}

\section{An Example}
\begin{example}
Let $\Sigma$ be the complete fan of the projective line,
$N=\mathbb{Z}\oplus \mathbb{Z}/3\mathbb{Z}$, and $\beta:
\mathbb{Z}^{2}\longrightarrow \mathbb{Z}\oplus
\mathbb{Z}/3\mathbb{Z}$ be given by the vectors
$\{b_{1}=(1,0),b_{2}=(-1,1)\}$. Then
$\mathbf{\Sigma}=(N,\Sigma,\beta)$ is a stacky fan.  We compute
that $(\beta)^{\vee}: \mathbb{Z}^{2}\longrightarrow
DG(\beta)=\mathbb{Z}$ is given by the matrix [3,3]. So we get the
following exact sequence:
\begin{equation}\label{gerbe1}
1\longrightarrow \mu_{3}\longrightarrow
\mathbb{C}^{\times}\stackrel{[3,3]^{t}}{\longrightarrow}
(\mathbb{C}^{\times})^{2}\longrightarrow
\mathbb{C}^{\times}\longrightarrow 1 \
\end{equation}
The toric Deligne-Mumford stack
$\mathcal{X}(\mathbf{\Sigma})=[\mathbb{C}^{2}-\{0\}/\mathbb{C}^{\times}]$,
where the action is given by
$\lambda(x,y)=(\lambda^{3}x,\lambda^{3}y)$. So
$\mathcal{X}(\mathbf{\Sigma})$ is the nontrivial $\mu_{3}$-gerbe
over $\mathbb{P}^{1}$ coming from the canonical line bundle over
$\mathbb{P}^{1}$.  Let $\mathcal{G}\longrightarrow
\mathcal{X}(\mathbf{\Sigma})$ be a $\mu_{2}$-gerbe such that it
comes from the $\mu_{2}$-gerbe over
$\mathcal{B}\mathbb{C}^{\times}$ given by the central extension
\begin{equation}\label{gerbe2}
1\longrightarrow \mu_{2}\longrightarrow
\mathbb{C}^{\times}\stackrel{(\cdot)^{2}}{\longrightarrow}
\mathbb{C}^{\times}\longrightarrow 1. \
\end{equation}
From  the sequence (\ref{gerbe1}) and (\ref{gerbe2}), we have:
$$
1\longrightarrow \mu_{3}\otimes \mu_{2}\longrightarrow
\mathbb{C}^{\times}\stackrel{[6,6]^{t}}{\longrightarrow}
(\mathbb{C}^{\times})^{2}\longrightarrow
\mathbb{C}^{\times}\longrightarrow 1 .\
$$
The Pontryagin dual of
$\mathbb{C}^{\times}\stackrel{[6,6]^{t}}{\longrightarrow}
(\mathbb{C}^{\times})^{2}$ is $(\widetilde{\beta})^{\vee}:
\mathbb{Z}^{2}\longrightarrow \mathbb{Z}$ which is given by the
matrix $[6,6]$. Taking Gale dual we have:
$$\widetilde{\beta}: \mathbb{Z}^{2}\longrightarrow \mathbb{Z}\oplus \mathbb{Z}_{6},$$
which is given by the vectors
$\{\widetilde{b}_{1}=(1,0),\widetilde{b}_{2}=(-1,1)\}$. Let
$\mathbf{\widetilde{\Sigma}}=(\widetilde{N},\Sigma,\widetilde{\beta})$
be the new stacky fan, then we have the toric Deligne-Mumford
stack
$\mathcal{X}(\mathbf{\widetilde{\Sigma}})=[\mathbb{C}^{2}-\{0\}/\mathbb{C}^{\times}]$,
where the action is given by
$\lambda(x,y)=(\lambda^{6}x,\lambda^{6}y)$. So
$\mathcal{X}(\mathbf{\widetilde{\Sigma}})$ is the canonical
$\mu_{6}$-gerbe over $\mathbb{P}^{1}$.

If the $\mu_{2}$-gerbe over $\mathcal{B}\mathbb{C}^{\times}$ is
given by the central extension
\begin{equation}\label{gerbe3}
1\longrightarrow \mu_{2}\longrightarrow
\mathbb{C}^{\times}\times\mu_{2}\stackrel{\alpha}{\longrightarrow}
\mathbb{C}^{\times}\longrightarrow 1, \
\end{equation}
where $\alpha$ is given by the matrix $[1,0]$. Then from
(\ref{gerbe1}) and (\ref{gerbe3}), we have
$$
1\longrightarrow \mu_{3}\otimes \mu_{2}\longrightarrow
\mathbb{C}^{\times}\times\mu_{2}\stackrel{\varphi}{\longrightarrow}
(\mathbb{C}^{\times})^{2}\longrightarrow
\mathbb{C}^{\times}\longrightarrow 1 ,\
$$
where $\varphi$ is given by the matrix $\left[
\begin{array}{cc}
  3&0 \\
  3&0\\
\end{array}
\right]. $ The Pontryagin dual of $\varphi$ is:
$(\widetilde{\beta}^{'})^{\vee}: \mathbb{Z}^{2}\longrightarrow
\mathbb{Z}\oplus \mathbb{Z}_{2}$ which is given by the transpose
of the above matrix. Taking Gale dual we get
$$\widetilde{\beta}^{'}: \mathbb{Z}^{2}\longrightarrow \mathbb{Z}\oplus \mathbb{Z}_{3}\oplus \mathbb{Z}_{2},$$
which is given by the vectors
$\{\widetilde{b}_{1}=(1,0,0),\widetilde{b}_{2}=(-1,1,0)\}$. So
$\mathbf{\widetilde{\Sigma}}^{'}=(\widetilde{N}^{'},\Sigma,\widetilde{\beta}^{'})$
is a stacky fan. The toric Deligne-Mumford stack
$\mathcal{X}(\mathbf{\widetilde{\Sigma}}^{'})=[\mathbb{C}^{2}-\{0\}/\mathbb{C}^{\times}\times\mu_{2}]$,
where the action is
$(\lambda_{1},\lambda_{2})\cdot(x,y)=(\lambda_{1}^{3}x,\lambda_{1}^{3}y)$.
So $\mathcal{G}^{'}=\mathcal{X}(\mathbf{\widetilde{\Sigma}}^{'})$
is the trivial $\mu_{2}$-gerbe over $\mathcal{X}(\mathbf{\Sigma})$
and  $\mathcal{X}(\mathbf{\widetilde{\Sigma}})\ncong
\mathcal{X}(\mathbf{\widetilde{\Sigma}}^{'})$.
\end{example}

\section*{Acknowledgments}
I would like to thank the referee for nice comments about the
proof of main results. I thank my advisor Kai Behrend for
encouragements and Hsian-Hua Tseng for valuable discussions.

\bibliographystyle{amsplain}

\end{document}